\documentclass[11pt]{article}

\usepackage{amsthm,amssymb,amsfonts,amsmath}
\usepackage{amscd} 
\usepackage{mathrsfs}
\usepackage[numbers, sort&compress]{natbib}
\usepackage{graphicx}
\usepackage[pagebackref=true]{hyperref}
\usepackage{color}

\providecommand{\R}{}

\renewcommand{\R}{\mathbb{R}}

\providecommand{\ora}[1]{}
\renewcommand{\ora}[1]{\overrightarrow{#1}}

\newtheorem{thm}{Theorem}
\newtheorem{lem}[thm]{Lemma}

\newtheorem{cor}[thm]{Corollary}

\numberwithin{thm}{section}
\numberwithin{equation}{section}

\usepackage{graphicx}
\usepackage{algpseudocode}
\usepackage{amssymb}
\usepackage{geometry}

\begin{document}
\title{A Metric Extension Theorem}
\author{Jacob Canel\\ Department of Mathematics, Pennsylvania State University, \\ \href{mailto:jmc8684@psu.edu}{jmc8684@psu.edu}}
\maketitle
\begin{abstract}
    We show that, given a partially defined metric $d:R\rightarrow\R$ where $R\subset X^2$ is a closed equivalence relation on a compact metrizable space $X$, there is a metric $\hat{d}:X^2\rightarrow \R$ such that $d = \hat{d}|_{R}$
\end{abstract}
\section{Introduction}

We prove an analogue of the Tietze extension theorem for partially defined metrics which have known values on some closed equivalence relation $R\subset X^2$. In particular, we prove necessary and sufficient conditions for the existence of an extension of a partially defined metric on such an $R$. The simplest case of such an extension is a corollary of a result by Hausdorff, frequently called the Hausdorff extension theorem, proven in his 1930 paper \cite{Hausdorff1930} about extending homeomorphisms. A simple proof of the theorem is given by Toruńczyk \cite{Toruńczyk1972}. 

It can be stated as follows:
\begin{thm}[Hausdorff]
    Let $A\subset X$ be a closed subset of a compact metric space $X$, and let $d_A$ be a metric on $A$, then $d_A$ extends to $d$, a metric on $X$.
\end{thm}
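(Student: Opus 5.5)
We aim to prove the Hausdorff extension theorem with a Toruńczyk-style construction. The metric $d_A$ should be compatible with the topology of $A$, and we want a compatible metric $d$ on $X$ restricting to $d_A$. The plan is to embed $(A,d_A)$ isometrically into a Banach space and extend the embedding continuously to all of $X$. The extension alone may fail to be injective off $A$, so we add a correction term that separates points of $X$ and vanishes on $A$.

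\emph{Step 1 (isometric embedding).} Since $A$ is compact, $(A,d_A)$ is separable. Fix a dense sequence $(a_n)$ in $A$ and define $\phi:A\to\ell^\infty$ by $\phi(a)=(d_A(a,a_n)-d_A(a_0,a_n))_n$. This is the Kuratowski--Fréchet embedding. By the triangle inequality and density, $\|\phi(a)-\phi(b)\|_\infty=d_A(a,b)$. The image $K=\phi(A)$ is compact, so its closed convex hull $C=\overline{\mathrm{conv}}(K)$ is a compact convex subset of $\ell^\infty$, by Mazur's theorem.

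\emph{Step 2 (continuous extension).} Apply the Dugundji extension theorem, a generalized Tietze theorem valid for maps from a closed subset of a metric space into a convex subset of a locally convex space. It gives a continuous $\Phi:X\to C$ with $\Phi|_A=\phi$. Here we fix any compatible metric $\rho$ on $X$. If one prefers to avoid Dugundji, one can extend each coordinate of $\phi$ by Tietze. The family $\{d_A(\cdot,a_n)\}$ is uniformly equicontinuous on $A$, so one uses an explicit formula such as the McShane-type $\inf_{a\in A}\bigl(f(a)+\omega(\rho(x,a))\bigr)$ with a common modulus $\omega$; this makes the extended coordinates equicontinuous and hence $\Phi$ continuous into $\ell^\infty$.

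\emph{Step 3 (separating term).} Let $g(x)=\rho(x,A)$, which is continuous, vanishes exactly on $A$, and is positive off $A$. Define
\[
d(x,y)=\|\Phi(x)-\Phi(y)\|_\infty+\bigl|g(x)-g(y)\bigr|\cdot 0+\min\{g(x)+g(y),\,\rho(x,y)\}.
\]
The middle term is written only to flag that the natural-looking separator $|g(x)-g(y)|$ is not enough; the operative term is $\min\{g(x)+g(y),\rho(x,y)\}$. The proof then checks four things:
\begin{enumerate}
\item $d|_{A\times A}=d_A$, because $g=0$ on $A$.
\item $d$ is symmetric.
\item $d$ is positive definite. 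If $x\ne y$ and both lie in $A$, positivity comes from the first term. Otherwise $g(x)+g(y)>0$ and $\rho(x,y)>0$, so the min term is positive.
\item $d$ satisfies the triangle inequality. The first term is a pseudometric. For $m(x,y)=\min\{g(x)+g(y),\rho(x,y)\}$, check the cases: if $m(x,z)\le g(x)+g(z)\le g(x)+g(y)+g(y)+g(z)$ we are done when both $m(x,y),m(y,z)$ are the $g$-sums. Mixed cases use $g(y)\le g(z)+\rho(y,z)$, which holds because $g$ is $1$-Lipschitz.
\end{enumerate}

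\emph{Step 4 (topology).} The metric $d$ is continuous with respect to $\rho$ on the compact space $X$. It separates points, so the identity $(X,\rho)\to(X,d)$ is a continuous bijection from a compact space to a Hausdorff space, hence a homeomorphism. Thus $d$ is compatible.

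The main obstacle I expect is the triangle inequality for the min term in the mixed cases of Step 3. If it fails, I would fall back to the standard fix $m(x,y)=\rho(x,y)\cdot\max\{g(x),g(y)\}$, suitably modified, or replace $m$ by the path metric it generates. Positivity then still holds because any chain between points not both in $A$ must accumulate a positive amount of $\rho$-length weighted by $g$ near the endpoint lying off $A$.
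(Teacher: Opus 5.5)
Your proof is correct, but it takes a genuinely different route from the paper. The paper never proves Hausdorff's theorem directly. It cites Hausdorff and Toru\'{n}czyk and recovers the result as the special case $R=\Delta_X\cup(A\times A)$ of Theorem~\ref{thm:maintheorem}. That proof runs in three steps:
\begin{enumerate}
\item It builds a large metric $g=h+r$ (Lemma~\ref{lem:dominatingmetric}). Here $h$ is a sup-distance built from an arbitrary Tietze extension $\hat d$ of $d$, and $r$ is a metric on $X/R$.
\item Zorn's lemma gives a minimal metric lying between $r$ and $g$.
\item The tunnel construction of Lemma~\ref{lem:metricbridge} shows this minimal metric cannot exceed $d$ anywhere on $R$.
\end{enumerate}

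Your two terms correspond to these ingredients. The term $\min\{\rho(x,A)+\rho(y,A),\rho(x,y)\}$ is exactly a metric on the quotient $X/A$, so it plays the role of $r$. The term $\|\Phi(x)-\Phi(y)\|_\infty$ plays the role of $h$. The key difference is that you extend the isometric \emph{embedding} of $(A,d_A)$ rather than the distance function itself. Your sup runs only over a dense subset of $A$ and $\Phi|_A$ is an isometry, so you get $d=d_A$ on $A\times A$ immediately. The paper's $h$ only dominates $d$ on $R$, which is why it needs the minimization and the tunnel lemma and you do not.

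What each route buys:
\begin{itemize}
\item Yours is explicit, is essentially Toru\'{n}czyk's argument, and avoids Zorn's lemma.
\item The paper's is designed for many nontrivial fibers varying continuously, where there is no single isometric embedding to extend.
\end{itemize}
In this special case the paper's side-constraint $d_i(a,y)+d_i(x,b)\ge|d(a,b)-d(x,y)|$ reduces to $d_i\ge d_A$ on $A\times A$, and the tunnel step visibly preserves that. In the general setting, checking that the tunnelled metric stays in the constrained family is the delicate step, and your approach sidesteps it entirely.

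Some minor cleanup:
\begin{itemize}
\item Delete the $|g(x)-g(y)|\cdot 0$ term and the closing fallback paragraph. Your case check in Step~3 already succeeds. The case where both $m(x,y)$ and $m(y,z)$ are $\rho$-values follows from the triangle inequality for $\rho$. The mixed case is $g(x)+g(z)\le\rho(x,y)+g(y)+g(z)$, by the $1$-Lipschitz property of $g$.
\item The appeal to Mazur's theorem is unnecessary. Dugundji's theorem only needs a convex target, and compactness of $C$ is never used later.
\end{itemize}
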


This is a subcase of the one of interest to us, under the equivalence relation $a \cong_R b$ iff $a = b$ or $a,b\in A$, with the partially defined metric $d_R$ defined by $d_R(a,b) = d_A(a,b)$ if both are in $A$, and zero if $a = b$.\\

Much of the modern work extending this classical theorem is by Yoshito Ishiki, who has proved interpolation results about families of metric extensions \cite{ishiki2026interpolationmetricsspacesmetrics}, constructed a function extending metrics on the subset $A$ with uniformity on the Hausdorff distance \cite{ishiki2024isometricextensormetrics}, and has extended the result to the case where the space $X$ is $\sigma$-compact and locally compact \cite{Ishiki_2023}, among other related results on ultrametrics \cite{ishiki2021embeddingextensioninterpolationultrametrics} \cite{Ishikifactor}.

Other notable work on related problems, includes the work of Isbell (particularly lemma 1.4) \cite{pjm/1103039456}, which gives an extension result for pseudometrics; Banakh et al. (see theorem 4.1) \cite{article}, which discusses the uniform regularity of an operator which extends such metrics; and Tymchatyn and coauthors \cite{Tymchatyn1}\cite{Tymchatyn2} on variable domains.

The work of this paper concerns a basic result about the topology of metric spaces, which the author believes to be a new result. This is in some respect surprising given how classical the area and theorems used are, but the author was unable to find either the theorem statement or a stronger statement in the literature. The author thanks Omar Mohsen for telling the author about this problem on a long drive from Penn State to Toronto.

\section{The Main Theorem}
\begin{thm}\label{thm:maintheorem}
 If $X$ is a compact metrizeable space and $R\subset X^2$ is a closed equivalence relation, and $d:R\rightarrow\R$ is a continuous family of metrics on the fibers, it may be extended to a metric on the entire space $X$.
\end{thm}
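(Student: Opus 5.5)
We will build $\hat d$ as a chain (path) metric glued from $d$ on the fibers and a small auxiliary metric across fibers. Fix a compatible metric $\rho$ on $X$. Because $R$ is a closed equivalence relation on a compact metrizable space, the quotient $Y=X/R$ is compact Hausdorff and the quotient map $\pi$ is closed. Moreover $Y$ is metrizable, since the image of a second countable space under a closed map with compact fibers is second countable. Fix a metric $\sigma$ on $Y$. First extend $d$ to a continuous function $D\colon X^2\to[0,\infty)$ using Tietze. Then define
\[
\hat d(x,y)=\inf\Big\{\sum_{i} c(z_i,z_{i+1}) : z_0=x,\ z_n=y\Big\},
\]
where $c(a,b)=d(a,b)$ if $(a,b)\in R$ and $c(a,b)=\sigma(\pi a,\pi b)+\rho(a,b)+\max(D(a,b),0)$ otherwise. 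The natural variant is to take a crossing cost that dominates $d$ near $R$, for instance $c(a,b)=\max\{D(a,b),\ \rho\text{-terms}\}$, with the infimum over chains.

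\textbf{Steps.} The argument then proceeds as follows.
\begin{enumerate}
\item The infimum is a pseudometric and is bounded by $c$.
\item For $(x,y)\in R$, no chain is cheaper than $d(x,y)$. A chain that leaves the fiber pays at least $\sigma$ to leave and return, so it helps only if it stays near the fiber. There, by continuity and compactness, $D$ is close to a metric on nearby fiber pairs, and the triangle inequality of $d$ on each fiber should prevent shortcuts. This forces $\hat d|_R=d$.
\item $\hat d(x,y)>0$ for $x\ne y$. This follows from $\sigma$ when $\pi x\ne\pi y$, and from $d$ being a metric on fibers together with the previous step otherwise.
\item $\hat d$ induces the topology of $X$. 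Since $\hat d\le c$ and $c$ is continuous and vanishes on the diagonal, the identity $(X,\rho)\to(X,\hat d)$ is continuous. By compactness of $X$ and the Hausdorff property of $\hat d$, it is a homeomorphism.
\end{enumerate}

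\textbf{Main obstacle.} The hard step is step 2: showing that chains hopping between nearby fibers cannot undercut $d$. The naive crossing cost $D(a,b)$ for $(a,b)\notin R$ need not satisfy the triangle inequality with fiber distances, because $D$ is only a continuous extension. My first attempt is to modify the crossing cost to
\[
c(a,b)=\sup_{(a',b')\in R}\big(d(a',b')-L\rho(a,a')-L\rho(b,b')\big)^+ + \sigma(\pi a,\pi b),
\]
a McShane-type upper envelope. Its triangle inequality would need to be checked. If that fails, I would instead use Hausdorff's theorem fiberwise, in the spirit of Toruńczyk's proof. In that approach I would embed $X$ in the product $Y\times\ell^\infty$ or $C(X)$ via
\[
x\mapsto\big(\pi x,\ (d(x,\cdot)\text{ on the fiber, extended continuously})\big),
\]
so that fiberwise Kuratowski-type embeddings are isometric on fibers. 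The required continuity of the family in the fiber parameter is exactly what continuity of $d$ on $R$ provides. With this approach, the main obstacle becomes showing that such a continuous choice of fiberwise isometric embeddings exists.
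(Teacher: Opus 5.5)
\textbf{Verdict: genuine gap.} Your Steps 1, 3 and 4 are fine once Step 2 is known. (In Step 4 you only have continuity of $c$ at the diagonal, and that is all you need.) But Step 2 is the entire content of the theorem, and the proposal does not prove it. With an admissible choice of $D$ it is in fact false for the cost you define.

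\emph{Counterexample to Step 2.} Take $X=[0,1]\times\{p,q\}$ with $R=\{((s,i),(t,j)) : s=t\}$ and $d((t,p),(t,q))=1+\sqrt{t}$. Let $\sigma(s,t)=|s-t|$, and let $\rho((s,i),(t,j))$ be $|s-t|$ if $i=j$ and $|s-t|+1$ if $i\neq j$. For the Tietze extension take $D((s,i),(t,j))=0$ if $i=j$ and $D((s,i),(t,j))=1+\sqrt{\min(s,t)}$ if $i\neq j$. For $0<t<1/16$ the chain $(t,p),(0,p),(0,q),(t,q)$ costs $2t+1+2t<1+\sqrt{t}=d((t,p),(t,q))$. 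A crossing cost that merely tends to $0$ at the diagonal cannot see how fast $d$ varies from fiber to fiber. The appeal to ``$D$ is close to a metric on nearby fiber pairs'' gives no quantitative control.

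\emph{Your proposed repairs.} The McShane envelope does not help. In this example it vanishes on the diagonal, which your Step 4 needs, only when $L\geq 2$. For such $L$ it is $0$ on both crossings above, so the same chain costs $1+2t<1+\sqrt{t}$. The embedding alternative is a restatement, not a reduction: given any extension $\hat d$, the map $x\mapsto \hat d(x,\cdot)$ is exactly such a family. Continuity of $d$ on $R$ does not provide it either. The natural fiberwise formulas $\min_{w\in[x]}(\cdots)$ are only lower semicontinuous in $x$, because $x\mapsto[x]$ is merely upper semicontinuous.

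\emph{Comparison with the paper, and why its device does not fill the gap.} The paper does not use chain metrics. It makes a Tietze extension $1$-Lipschitz in each variable via $h(x,y)=\max_z|\hat d(x,z)-\hat d(y,z)|+\max_z|\hat d(z,x)-\hat d(z,y)|$. This gives the two-fiber inequality $e(a,y)+e(x,b)\geq|d(a,b)-d(x,y)|$ for $e=g=h+r$. It then applies Zorn's lemma to metrics between $r$ and $g$ satisfying that inequality, and lowers a minimal one with the tunnel of Lemma~\ref{lem:metricbridge}. Using $h$ as your crossing cost would still not close Step 2, because two-fiber control does not exclude shortcuts through three fibers. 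Take the six-point discrete space with classes $\{x,y\},\{p,q\},\{s,t\}$ and $d(x,y)=10$, $d(p,q)=d(s,t)=1$. Let $D$ be constant on products of the blocks $\{x,p\}$, $\{q,s\}$, $\{t,y\}$: value $0$ within a block, $1$ between the first and second blocks and between the second and third, and $10$ between the first and third. Then $h(x,p)=h(q,s)=h(t,y)=0$. The chain $x,p,q,s,t,y$ costs $2$ plus three cross-class $\sigma$-distances, which is below $10$ if these are, say, all $1$.

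The same example bears on the paper's own step. With $r\equiv 1$ across classes, every metric $e\leq g$ agreeing with $d$ on $\{p,q\}$ and $\{s,t\}$ has $e(x,y)\leq 5$. So no member of the paper's class $A$ restricts to $d$, and the paper's claim that the tunnel metric $k$ lies in $A$ fails here. That route cannot supply your missing step.

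\emph{What is missing.} You need a crossing cost that vanishes at the diagonal yet provably dominates the defect $d(x,y)-\sum(\text{fiber steps})$ of every closed chain through arbitrarily many nearby fibers, together with a proof of that domination. As written, Step 2 is only asserted.
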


We reserve $X$ for a compact metrizeable space and $R$ for a closed equivalence relation. $X/R$ will be the quotient of $X$ by the relation $R$, and we denote by $[a]$ the equivalence class in $X/R$ of $a$. As $R$ is closed in $X^2$, $X/R$ is also a compact metric space with the metric $r([x],[y]) = dist_m((x,y),R )$,  where $m$ is some metric on $X^2$. We further reserve $d$ as the continuous family of metrics on $R$, by which we mean a continuous map $d:R\rightarrow \R$ such that $d|_{[x]\times [x]}$ is a metric. The proof of the theorem is by a sequence of constructions.

\begin{lem}\label{lem:dominatingmetric}
    Given a metric $r$ on $X/R$. There is a metric $g$ such that $g(x,y)\geq r([x],[y])$ and, in the case $x =_R y$, $g(x,y) \geq d(x,y)$.
\end{lem}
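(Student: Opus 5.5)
We build $g$ as a sum of two pieces: one piece handles the quotient metric $r$, and the other dominates $d$ on $R$.

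Fix a compatible metric $\rho$ on $X$. The piece $(x,y)\mapsto r([x],[y])$ is continuous, since the quotient map is continuous. It is a pseudometric on $X$, but it vanishes on $R$. So we need a metric $h$ on $X$ with $h(x,y)\ge d(x,y)$ whenever $x=_R y$. Then we set $g=r([\cdot],[\cdot])+h$. This is a metric, because $h$ already separates points, and it is compatible with the topology of $X$. It dominates both $r([x],[y])$ and, on $R$, $d(x,y)$, since both summands are nonnegative.

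To construct $h$, let $M=\max_R d$, which is finite because $R$ is compact. The natural first attempt is $h=C\rho$ for a large constant $C$. This fails near the diagonal, because $d(x,y)/\rho(x,y)$ need not be bounded. So instead we take $h(x,y)=\phi(\rho(x,y))$ for a suitable concave function $\phi$. Define
\[ \omega(t)=\sup\{d(x,y): (x,y)\in R,\ \rho(x,y)\le t\}. \]
Since $d$ is continuous on the compact set $R$ and $d=0$ on the diagonal, $\omega$ is nondecreasing, bounded by $M$, and satisfies $\omega(t)\to 0$ as $t\to 0$. The last claim holds because if $d(x_n,y_n)\ge\epsilon$ with $\rho(x_n,y_n)\to 0$, a convergent subsequence gives a diagonal point where $d\ge\epsilon$, which is a contradiction.

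Next let $\phi$ be the least concave majorant of $\omega$ on $[0,\operatorname{diam}\rho]$, plus $t$ so that it is strictly increasing. This $\phi$ is concave, continuous at $0$ with $\phi(0)=0$, and positive for $t>0$. The standard bound $\hat\omega(t)\le \omega(s)+Mt/s$ for every $s>0$ gives the continuity at $0$, and this is the main technical point to verify carefully. Concavity together with $\phi(0)=0$ gives subadditivity. Hence $h=\phi\circ\rho$ is a metric inducing the same topology, since $\phi$ is an increasing homeomorphism near $0$. By construction $h(x,y)\ge\omega(\rho(x,y))\ge d(x,y)$ on $R$.

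The main obstacle is therefore the passage from the modulus $\omega$ to a concave, subadditive majorant that still tends to $0$ at $0$. This is exactly where compactness of $R$ and the vanishing of $d$ on the diagonal are used. An alternative would be $h=\rho+\sum_n 2^{-n}$-weighted truncations, but the concave-majorant approach is the one to carry out. Note that $d$ being a metric on fibers is not needed here, only its continuity and its vanishing on the diagonal.
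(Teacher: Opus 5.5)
Your proof is correct, but it takes a different route from the paper. The paper uses no background metric $\rho$ and no modulus of continuity. It extends $d$ by Tietze to $\hat d$ on $X^2$ and takes the pseudometric $h(x,y)=\max_z|\hat d(x,z)-\hat d(y,z)|+\max_z|\hat d(z,x)-\hat d(z,y)|$, for which $\hat d$ is $1$-Lipschitz in each variable. Choosing $z=x$ gives $h\ge d$ on $R$. Then $g=h+r([\cdot],[\cdot])$ separates points because $r$ separates distinct classes and $d$ is positive off the diagonal inside a class.

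The paper's $h$ is therefore only a pseudometric, and it needs $d$ to be a genuine metric on fibers. Your $\phi\circ\rho$ is already a compatible metric, and you only use continuity of $d$ and $d=0$ on the diagonal. You also check compatibility with the topology, which the paper leaves implicit. In exchange, the paper avoids the concave-majorant bookkeeping. The facts you leave unstated are routine: the least concave majorant of a nondecreasing $\omega$ is nondecreasing, and monotonicity of $\phi$ still gives the triangle inequality when $\rho(x,y)+\rho(y,z)>\operatorname{diam}\rho$. Since the paper never uses a metric on $X$, its argument also gives Corollary~\ref{cor:metformet}.

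One further difference matters for how the lemma is used later, although it is not part of the statement you were given, so it is not a gap in your proof. The paper's $g$ satisfies $g(a,x)+g(b,y)\ge|d(a,b)-d(x,y)|$ for all $(a,b),(x,y)\in R$. This is what the proof of Theorem~\ref{thm:maintheorem} takes from the lemma when it asserts that the set $A$ is nonempty.

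Your $g$ need not have this property, because $\omega$ only sees $d$ near the diagonal. For example, take:
\begin{itemize}
\item $X=[0,1]\times\{0,1\}\subset\R^2$ with Euclidean $\rho$;
\item $R$ identifying $(t,0)$ with $(t,1)$;
\item $d((t,0),(t,1))=1+\sqrt t$ and $r(s,t)=|s-t|$.
\end{itemize}
Then $\hat\omega(u)=\min(2u,2)$, so your $g$ equals $4|s-t|$ along each copy for $|s-t|\le 1$. Hence $g((s,0),(0,0))+g((s,1),(0,1))=8s<\sqrt s=|d((s,0),(s,1))-d((0,0),(0,1))|$ for small $s>0$.

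If you want this property too, replace $\omega$ by $\Omega(t)=\sup\{|d(a,b)-d(x,y)|:(a,b),(x,y)\in R,\ \rho(a,x)+\rho(b,y)\le t\}$. It dominates $\omega$ (compare $(x,y)$ with $(x,x)$), and it tends to $0$ by uniform continuity of $d$ on $R$. Subadditivity of its concave majorant then gives the estimate.
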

\begin{proof}
    Using the Tietze extension theorem for functions, extend $d$ to a continuous function $\hat{d}$ on $X^2$. We define a pseudometric $h$ such that $\hat{d}$ function will be $1$-Lipschitz with respect to $h$. $h$ is defined by
    \[ h(x,y) = \max_{z\in X} |\hat{d}(x,z) - \hat{d}(y,z)|+ \max_{z\in X}|\hat{d}(z,x)-\hat{d}(z,y)|\]
    We claim that $h$ is a pseudometric as symmetry and nonnegativity are implicit from the definition. Trivially, we have
    \[|\hat{d}(x,z) - \hat{d}(y,z)| \leq |\hat{d}(x,z) - \hat{d}(p,z)|+  |\hat{d}(p,z) - \hat{d}(y,z)|\]
      \[ |\hat{d}(z,x)-\hat{d}(z,y)| \leq  |\hat{d}(z,x)-\hat{d}(z,p)| + |\hat{d}(z,p)-\hat{d}(z,y)|\]
    for any $z$ and $p$, and so we have
    \[h(x,y)\leq h(x,p) + h(y,p)\]
    by taking maxima over $z$.\\
    $h(x,y)$ separates any points within an $R$ equivalence class, as taking $z = x$ gives $h(x,y) \geq |\hat{d}(x,x) - \hat{d}(x,y)| = d(x,y)$ for $y =_R x$. If we then define
    \[g(x,y) = h(x,y) + r([x],[y])\]
    We will get a pseudometric which separates all points in $X$, thus a metric on $X$.
\end{proof}

\begin{cor}\label{cor:metformet}
    $X$ is metrizeable iff $X/R$ is metrizeable, where $R$ carries a continuous family of metrics along its fibers.
\end{cor}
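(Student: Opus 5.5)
The corollary has two directions, and I will prove each directly. The standing setting is: $X$ compact Hausdorff, $R\subset X^2$ a closed equivalence relation, and $d:R\to\R$ continuous with each restriction $d|_{[x]\times[x]}$ a metric. Since metrizability of $X$ is the thing in question, I will use only compactness and Hausdorffness of $X$ and Tietze's theorem for normal spaces, rather than the standing assumption that $X$ is metrizable.

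\emph{($\Leftarrow$)} Suppose $X/R$ is metrizable, with metric $r$. Then I will repeat the construction of Lemma \ref{lem:dominatingmetric} verbatim.
\begin{enumerate}
\item $X$ is compact Hausdorff, hence normal, and $R$ is closed in $X^2$. So Tietze's theorem gives a continuous extension $\hat d$ of $d$ to $X^2$.
\item Set $g(x,y)=h(x,y)+r([x],[y])$, with $h$ as in the lemma. The lemma's argument already shows that $g$ is a pseudometric separating points, hence a metric on $X$.
\item It remains to check that $g$ is continuous on $X^2$, which is the one step not explicitly addressed in the lemma.
\begin{itemize}
\item $r([x],[y])$ is continuous because the quotient map is continuous.
\item $\hat d$ is uniformly continuous on the compact space $X^2$. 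Hence $h(x_n,x)\to 0$ whenever $x_n\to x$, so $h$ is continuous.
\end{itemize}
\item Since $g$ is continuous, the identity $(X,\tau)\to (X,g)$ is a continuous bijection from a compact space onto a Hausdorff space. It is therefore a homeomorphism, and $g$ metrizes $X$.
\end{enumerate}

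\emph{($\Rightarrow$)} Suppose $X$ is metrizable, with metric $m$. The plan is to show that the formula $r([x],[y])=\mathrm{dist}_{m}((x,y),R)$ stated in the setup is a well-defined metric inducing the quotient topology. I will use the max metric on $X^2$, so that $m((a,b),(c,d))=\max(m(a,c),m(b,d))$.
\begin{enumerate}
\item \textbf{Alternative route.} It may be easier to use the Hausdorff distance between equivalence classes, $r([x],[y])=d_H([x],[y])$, where the classes are compact because $R$ is closed.
\item \textbf{Metric axioms.} $d_H$ is a metric on the nonempty compact subsets of $X$.
\item \textbf{Agreement with the quotient topology.} I need to show that $x\mapsto [x]$ is continuous into the Hausdorff metric. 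This follows from upper semicontinuity of the decomposition, which holds because $R$ is closed and $X$ is compact.
\item \textbf{Lower semicontinuity.} I also need lower semicontinuity, and this is the expected obstacle: a closed equivalence relation need not give a continuous map into the hyperspace.
\item \textbf{Fallback.} If that fails, I will abandon Hausdorff distance. $X/R$ is compact Hausdorff, since closed $R$ on a compact space gives a Hausdorff quotient. It is also second countable, since it is a continuous image of the compact metric space $X$ and closed maps preserve weight. Urysohn's metrization theorem then shows $X/R$ is metrizable.
\end{enumerate}
I will take the fallback as the actual proof of this direction, since the Hausdorff-distance route may not go through.

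The $(\Leftarrow)$ direction is essentially Lemma \ref{lem:dominatingmetric} plus the compact-to-Hausdorff argument in step 4. The main work is the metrizability of the quotient in the $(\Rightarrow)$ direction.
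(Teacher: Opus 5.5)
Your proposal is correct. For ($\Leftarrow$) it follows the paper: the paper's whole proof is the remark that Lemma~\ref{lem:dominatingmetric} needs only some metric $r$ on $X/R$, which is your steps 1--2. Your steps 3--4 add what the paper omits. They show that $g$ is continuous, so by the compact-to-Hausdorff argument it induces the given topology; a point-separating metric on the underlying set need not do this.

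Two small repairs are needed in ($\Leftarrow$):
\begin{itemize}
\item Apply Tietze on $X^2$, which is normal because it is compact Hausdorff (normality of $X$ alone is not the relevant hypothesis).
\item $X$ is not yet known to be first countable, so sequential continuity of $h$ is not enough. Either use nets, or argue directly. Given $x$ and $\varepsilon>0$, choose for each $z$ open sets $U_z\ni x$, $V_z\ni z$ with $|\hat d(y,w)-\hat d(x,z)|<\varepsilon/2$ on $U_z\times V_z$. Cover $X$ by finitely many $V_{z_i}$. On $U=\bigcap_i U_{z_i}$ you then get $\sup_w|\hat d(y,w)-\hat d(x,w)|<\varepsilon$, and likewise in the other slot. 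Joint continuity follows from $|h(x,y)-h(x',y')|\le h(x,x')+h(y,y')$.
\end{itemize}

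For ($\Rightarrow$) your route genuinely differs from the paper's, which relies only on the setup formula $r([x],[y])=\mathrm{dist}_m((x,y),R)$. You were right to avoid that formula, because it depends on the representatives. Take $X=[0,3]$, let $R$ identify only $0$ and $3$, and use the max metric on $X^2$. Then $(0,1)$ is at distance $1/2$ from $R$, but $(3,1)$ is at distance $1$.

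You were also right to drop the Hausdorff-distance idea. Your step 3 there is false as stated. Collapsing $[0,\tfrac12]\subset[0,1]$ gives an upper semicontinuous decomposition in which that class is $d_H$-isolated, although it is not isolated in $X/R$. In the write-up, present only the fallback.

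The fallback needs one correction. Closed maps do not preserve weight in general: collapsing $\mathbb{Z}\subset\R$ to a point yields a quotient that is not first countable. What holds is that perfect maps do not raise weight. The map $X\to X/R$ is perfect because $X$ is compact and $X/R$ is Hausdorff. Equivalently, cite that a Hausdorff continuous image of a compact metrizable space is metrizable.

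Your route is non-constructive and never uses $d$, whereas a repaired version of the paper's formula would give an explicit metric on $X/R$. As the paper stands, however, yours is the only complete argument for this direction.
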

\begin{proof}
    This is immediate from the previous argument, without the assumption that $r$ comes from a metric on $X$.
\end{proof}

We now have a metric large enough to accommodate $d$ beneath it. This is half the battle. We now need to look at ways to decrease metrics so that they will be at least as small as $d$. The intuition here is simple: attach a ``tunnel" of length $r$ from point $a$ and take the new distance allowing for transit through the tunnel.

\begin{lem}\label{lem:metricbridge}
    Let $g(x,y)$ be a metric on a space $X$ and let $0< r < g(a,b)$, the function 
    $$f(x,y) =  \min(g(x,y), g(x,a) + r + g(b,y), g(x,b) + r + g(a,y))$$
    is a metric on $X$
\end{lem}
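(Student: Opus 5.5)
The plan is to verify the metric axioms for $f$ directly; the triangle inequality is the only real work. Symmetry is immediate, because swapping $x$ and $y$ exchanges the second and third terms of the minimum. Nonnegativity is clear, and $f(x,x)=0$ because $g(x,x)=0$ is one of the candidates. For definiteness I will show that, for $x\neq y$, each of the three terms is strictly positive. The term $g(x,y)$ is positive since $g$ is a metric, and the other two are at least $r>0$. (The hypothesis $r<g(a,b)$ is not needed for $f$ to be a metric. It only ensures $f(a,b)=r$, i.e.\ that the tunnel actually shortens the distance, which is what the later construction will use.)

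For the triangle inequality I will interpret $f$ as a shortest-path distance. Consider the graph obtained from $X$ by adding one extra edge of length $r$ between $a$ and $b$. A path from $x$ to $y$ that uses the tunnel at most once has length equal to one of the three expressions. I then want to show $f(x,y)\le f(x,p)+f(p,y)$ by cases, according to which term realizes $f(x,p)$ and which realizes $f(p,y)$. That gives nine cases, reduced to fewer by symmetry.

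\begin{itemize}
\item If neither uses the tunnel, the inequality follows from the triangle inequality for $g$.
\item If exactly one uses it, say $f(x,p)=g(x,a)+r+g(b,p)$ and $f(p,y)=g(p,y)$, then $g(b,p)+g(p,y)\ge g(b,y)$, so the sum dominates $g(x,a)+r+g(b,y)\ge f(x,y)$.
\item If both use the tunnel in the same direction, e.g.\ $g(x,a)+r+g(b,p)+g(p,a)+r+g(b,y)$, then $g(b,p)+g(p,a)\ge g(a,b)$ and $r\ge 0$ let me drop material and bound below by $g(x,a)+r+g(b,y)$.
\item If both use the tunnel in opposite directions, e.g.\ $g(x,a)+r+g(b,p)+g(p,b)+r+g(a,y)$, the path returns through the tunnel. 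Dropping $2r+2g(b,p)\ge 0$ leaves $g(x,a)+g(a,y)\ge g(x,y)\ge f(x,y)$.
\end{itemize}

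So every case reduces to a single use of the triangle inequality for $g$ plus discarding nonnegative terms.

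The main obstacle is bookkeeping: making sure the case enumeration is exhaustive and that each combination collapses to one of the three candidate terms. I expect no genuine difficulty. The only subtle point is that two tunnel uses never need to be retained, since using the tunnel twice is never shorter than going around. That is exactly why a single minimum over three terms suffices and no iteration is needed.
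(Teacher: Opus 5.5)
Your proof is correct and is essentially the paper's argument: a case analysis on which term of the minimum realizes each of the two legs, with every case closed by the triangle inequality for $g$ after discarding nonnegative terms (and you are right that $r<g(a,b)$ is not needed for the metric property). The paper argues by contradiction and avoids your same-direction case by noting that a leg using the tunnel must end at whichever of $a,b$ is closer to the intermediate point, but your direct enumeration handles that case just as easily.
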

\begin{proof}
    Symmetry and positivity are once again clear. Furthermore this definition is invariant under interchange of $a$ and $b$.  Suppose $f(x,y) > f(x,z) + f(z,y)$ for some $z$. We must have
    \[ f(x,z) + f(z,y) <  g(x,z) + g(z,y)\]
    or else the triangle inequality for $g$ is violated.
    If we have
    \[f(x,z) \neq g(x,z) \text{ and } f(y,z) \neq g(x,z)\]
    then WLOG (by selecting $b$ to be the point closer to $z$)
    \[f(x,z) = g(x,a) + r + g(b,z) \text{ and } f(y,z) = g(y,a) + r + g(b,z)\]
    and $g(x,y) \geq f(x,y) > g(x,a) + g(a,y) $, which violates the triangle inequality. Therefore WLOG $f(x,z) = g(x,z)$ and $f(y,z) = g(y,a) + r + g(b,z) $. We must have that $g(y,a) \leq g(y,b)$, as otherwise $f(y,z) > g(y,z) $, which is impossible. Thus,
    \[f(x,y) \leq g(y,a) + r + g(b,x) \leq g(y,a) + r + g(b,z) + g(z,x) \]
    and consolidating, we obtain
    \[f(x,y) \leq f(y,z) + f(z,x)\]
    which is the triangle inequality for $f$.
\end{proof}

\begin{proof}[Proof of theorem \ref{thm:maintheorem}]
   Let $A$ be the set of all metrics on $X$ for which $d_i(a,y) + d_i(x,b) \geq |d(a,b) - d(x,y) | $, dominate $r([x],[y])$ pointwise everywhere, and are dominated by $g(x,y)$ pointwise everywhere. Lemma \ref{lem:dominatingmetric} shows that this set is nonempty. Order this set by pointwise domination. Suppose we have a descending chain indexed by a linear order $(I,\leq_I)$, i.e. $d_i \leq d_j$ pointwise if $i \leq j$. Define 
   \[d_I(x,y) = \inf_{i\in I}d_i(x,y)\]
   again, symmetry and non-negativity are clear. The triangle inequality follows by splitting the infimum
   \[\inf_{i\in I}d_i(x,y) \leq \inf_{i\in I}d_i(x,z) + \inf_{i\in I}d_i(z,y)\]
   So $d_I$ is a pseudometric.
   \[d_i(a,y) + d_i(x,b) \geq |d(a,b) - d(x,y) |\]
   by assumption, and if $[x] = [y]$
   \[d_i(x,y) = d_i(x,y) + d_i(y,y) \geq |d(x,y) - d(y,y) | \geq d(x,y) \] 
   and if not then $d_i(x,y) \geq r([x],[y])$ by assumption. Taking infima yields that $d_I$ separates points and thus is a metric. Thus $d_I$ is an element of $A$. Zorn's lemma guarantees that $A$ must contain a minimal element. Call one such minimal element $\bar{d}$.\\
   Assume, in hope of a contradiction, that
   \[\bar{d}(a,b) > d(a,b)\]
   for some $a,b$ for which $d$ is defined. The function \[k(x,y) := \min(\bar{d}(x,y),\bar{d}(x,a) + d(a,b) + \bar{d}(b,y), \bar{d}(x,b) + d(b,a) + \bar{d}(a,y)  )\]
   is a metric by lemma \ref{lem:metricbridge}. Furthermore, WLOG we either have
   \[k(x,y) \geq \bar{d}(x,a) + d(a,b) + \bar{d}(b,y) \geq r([x],[a]) + r([b],[y]) \geq r([x],[y])\]
   or 
   \[k(x,y) = \bar{d}(x,y) \geq r([x],[y])\]
   so $k(x,y)$ dominates $r([x],[y])$.\\
   Furthermore
   \[ k(x,y) = \bar{d}(x,a) + d(a,b) + \bar{d}(b,y) \geq d(x,y) \]
   And so $k$ is an element of $A$, but is strictly less than $\bar{d}$, contradicting minimality. Therefore we must have that $\bar{d}(x,y) = d(x,y)$ for all $(x,y)\in R$.

\end{proof}
 \bibliographystyle{plain}
 \bibliography{bibo}
\end{document}